\documentclass[10pt,journal]{IEEEtran}   	
\usepackage{graphicx}				
\usepackage[cmex10]{amsmath}
\interdisplaylinepenalty=2500
\usepackage{amssymb}
\usepackage{amsthm}
\usepackage{bm}
\usepackage{color}

\theoremstyle{plain}
\newtheorem{thm}{Theorem}
\newtheorem{lem}{Lemma}
\newtheorem{prop}{Proposition}
\newtheorem{cor}{Corollary}

\theoremstyle{definition}
\newtheorem{defn}{Definition}

\theoremstyle{remark}
\newtheorem{rem}{Remark}


\newcommand{\sref}[1]{Section~\ref{#1}}
\newcommand{\abs}[1]{\left|#1\right|}

\newcommand{\diag}{\mathop{\mathrm{diag}}}

\usepackage{amsfonts}



\title{Explicit Conditions on Existence and Uniqueness of Load-Flow Solutions in Distribution Networks}

\author{Cong~Wang,~\IEEEmembership{Student Member,~IEEE}, Andrey~Bernstein,~\IEEEmembership{Member,~IEEE}, Jean-Yves~Le~Boudec,~\IEEEmembership{Fellow,~IEEE}, and Mario~Paolone,~\IEEEmembership{Senior Member,~IEEE}

\thanks{The authors are with EPFL, Lausanne, Switzerland.} \vspace{-0.8cm}}


\begin{document}
\maketitle

\begin{abstract}
We present explicit sufficient conditions that guarantee the existence and uniqueness of the feasible load-flow solution for distribution networks with a generic topology (radial or meshed) modeled with positive sequence equivalents. In the problem, we also account for the presence of shunt elements. The conditions have low computational complexity and thus can be efficiently verified in a real system. Once the conditions are satisfied, the unique load-flow solution can be reached by a given fixed point iteration method of approximately linear complexity.
Therefore, the proposed approach is of particular interest for modern active distribution network (ADN) setup in the context of real-time control. The theory has been confirmed through numerical experiments.
\end{abstract}
\vspace*{-0.2cm}
\begin{IEEEkeywords}
load flow solution, fixed point method, existence and uniqueness, distribution networks.
\end{IEEEkeywords}
\section*{Nomenclature}
\begin{tabular}{rl}
$v = (v_1, v_2, ..., v_N)^T$ & $v_k$ is the positive-sequence \\
& complex voltage at bus $k$. \\
$i = (i_1,i_2,...,i_N)^T$ & $i_k$ is the positive-sequence  \\
& complex nodal current of bus $k$. \\
$s = (s_1, s_2, ..., s_N)^T$ & $s_k$ is the complex nodal\\
& power injected into bus $k$. \\
bus $0$ & Slack bus, with $v_0$=1 p.u. \\
$i_0,s_0$ & Slack bus complex nodal \\
& current and power. \\
$Y$ & Positive-sequence nodal \\
& admittance matrix. \\
$Y_{LL}$ & Square submatrix of $Y$, \\
& omitting the slack bus. \\
$w_i$, $i=1,...,N$ & Positive-sequence complex voltage  \\
& at node $i$ when $s$ is a zero vector. \\
$W = \diag(w_i)$\\
$u = W^{-1}v$ & Normalized node voltages. \\
(For any $z$ in $\mathbb{C}$) $\overline{z}$ & The complex conjugate of $z$.
\end{tabular}

\section{Introduction}
\IEEEPARstart{T}{he} load-flow problem, which expresses the link between complex node voltages and complex nodal power injections, is one of the main tasks in power system theory and applications. In the context of distribution networks, it is especially interesting to consider the case where non-slack buses are $PQ$ buses. In this paper we consider a network with one single slack bus, at which the complex voltage is assumed fixed and known, while the rest are $PQ$ buses. Given a vector of nodal power injections into $PQ$ buses, the problem is then to compute the vector of complex node voltages in the network that is \emph{feasible} (i.e., close to $1$ p.u. in magnitude). In the rest of the manuscript, we make reference to the load-flow problem formulated for the positive sequence.
%
%
%

Due to the non-linearity of the equations, the existence and uniqueness of the solution to the load-flow problem is not guaranteed in general \cite{MultiSol1}, \cite{MultiSol2}, \cite{MultiSoluAlbert}. There is extensive literature on the subject as detailed in Section \ref{sec:soa}. But for grid control, in order to maintain the system in feasible electrical states, it is essential to provide conditions guaranteeing that the implemented power setpoint leads to the unique feasible solution of the load-flow problem. Specifically, in active distribution networks (and particularly, microgrids), these conditions are further expected to be both explicitly formulated and verifiable in real-time.

There are multiple scenarios that have such expectations. One typical case is related to the \emph{islanding maneuver}, namely the disconnection from the main grid due to an intentional or non-intentional decision (e.g., \cite{island-Concep}). In particular, with respect to the non-intentional islanding, there is a need to evaluate in real-time whether a given resource can serve as a slack for the islanded microgrid \cite{IslandingCommelec}. This evaluation is based on verifying whether the currently implemented setpoint leads to the unique feasible solution of the corresponding load-flow problem.
Another practical example is related to the recently introduced framework for performing real-time control of active distribution networks using explicit power setpoints \cite{commelec1}.
In this framework, the knowledge of the current system state (obtained  via a corresponding state estimation procedure) is assumed. A typical task in this framework is then to decide whether a given collection of power setpoints is \emph{admissible} in the sense that the application of these setpoints will result in a feasible voltage profile of the grid. Hence as we can see from these situations, the research in this paper is of practical significance.

In the paper, we give explicit conditions that guarantee the existence and uniqueness of the load-flow solution for (possibly meshed) distribution networks with shunt elements. The unique solution can be reached by an iterative load-flow method given in this paper. Our conditions depend on the current state of the grid as well as on the requested power setpoints. The proposed approach is computationally efficient, with approximately linear complexity.  Hence it can be applied in a real-time control framework. We also provide conditions in the ``classical'' setup, where the knowledge of the current grid state is absent. In this case, we show that our results are stronger than those introduced so far in the literature. Note that it is possible to extend our results to more general three-phase distribution networks, but this is the subject of ongoing work.

The paper is structured as follows. In Section \ref{sec:soa}, we review the related work. In Section \ref{sec:pb}, we present the load-flow problem and its useful equivalent formulation as a fixed point problem. In Section \ref{sec:res}, we give our main result, and prove it in Section \ref{sec:proofs}. In Section \ref{sec:num}, we provide numerical evaluation of our method. Finally, we conclude in Section \ref{sec:conc}.

\section{Related Work} \label{sec:soa}
In the last few decades, the existence and uniqueness of the solution to the load-flow problem have been studied from various perspectives.

In \cite{MIlic1}, conditions for the existence and uniqueness of the solution to reactive power-voltage magnitude problem are given and analyzed. Based on \cite{MIlic1}, \cite{MIlic2} extends the result to active power-voltage angle problem. Under certain assumptions, by decoupling the active and reactive power (i.e., considering a sub-problem of active power with voltage angle, and a sub-problem of reactive power with voltage magnitude), sufficient conditions for load-flow solvability are explored. For balanced radial distribution networks, the uniqueness of a feasible load-flow solution is proved by exploiting the radial structure in \cite{Chiang1}. In \cite{Chiang2}, the result is extended to the unbalanced radial three-phase distribution networks. However, all these results are based on certain assumptions and cannot be generically applied.

Recently, the focus has been moved to fixed point load-flow analysis since the fixed point theorem can guarantee the uniqueness of the load-flow solution. In fact, the first attempt of applying fixed point theorem to power systems dates back to \cite{Conv1}, which focused on the study of convergence property of the Newton method. For the latest research, in \cite{Lisboa}, an efficient fixed point load-flow method is presented for radial distribution networks, but there is no further discussion about the convergence and solvability. Later, in \cite{Bolo}, another form of fixed point load-flow method is proposed for distribution network with single slack bus. In the same paper, sufficient conditions are given to guarantee the existence and uniqueness of solution. These sufficient conditions are improved in \cite{ImproveBolo}.

In this paper, we use a fixed point formulation of the load-flow problem; then we specify a domain around a feasible point and provide sufficient conditions that guarantee the existence and uniqueness of load-flow solution in this domain. Under the proposed conditions, the unique solution can be reached using the fixed point iteration. It should be noticed that, by this approach, the feasibility of load-flow solution is usually preserved. 

The theory proposed here shares some similarities with the fixed point load-flow methods established in \cite{Lisboa}, \cite{Bolo} and \cite{ImproveBolo}. But, the method in \cite{Lisboa} is a special case of this paper. Furthermore, the sufficient conditions in this paper are more general than the conditions in \cite{Bolo} and \cite{ImproveBolo}, and thus improve these results.


\section{The Load Flow Problem} \label{sec:pb}


We consider a distribution network modeled by its positive sequence equivalents with $N$ $PQ$ buses and one slack bus (in essence, a $V \theta$ bus). Without loss of generality, we assume that the complex voltage of the slack bus is $1$ p.u. Let $v = (v_1, v_2, ..., v_N)^T$ denote the vector of complex node voltages of the $PQ$ buses, $i = (i_1,i_2,...,i_N)^T$ denote the vector of complex nodal currents into the $PQ$ buses, $i_0$ denote the complex nodal current into the slack bus, $s = (s_1, s_2, ..., s_N)^T$ denote the vector of complex nodal powers injected into the $PQ$ buses (negative value in real or imaginary part means consumed), and $s_0$ denote the complex nodal power injected into the slack bus. Also, for any complex number $z$, we denote its complex conjugate by $\overline{z}$. A similar notation holds for vectors and matrices.

As known, the nodal powers and nodal currents can be expressed in matrix form as
\begin{equation} \label{eqn:pfe1}
\left[
\begin{array}{c}
\overline{s}_{0}\\
\overline{s}
\end{array}
\right]
=
\left[
\begin{array}{cc}
1 &\\
&\diag(\overline{v})
\end{array}
\right]
\left[
\begin{array}{c}
i_{0}\\
i
\end{array}
\right],
\end{equation}
\begin{equation} \label{eqn:pfe2}
\left[
\begin{array}{c}
i_{0}\\
i
\end{array}
\right]
=Y
\left[
\begin{array}{c}
1\\
v
\end{array}
\right].
\end{equation}
Here, $Y$ is the $(N+1)\times(N+1)$ nodal admittance matrix of the system.

The classical \emph{load-flow problem} in this setup is defined as follows: Given the nodal powers $s$, solve the set of equations \eqref{eqn:pfe1} and \eqref{eqn:pfe2} to obtain the nodal voltages $v$ and the power at the slack bus $s_0$.
The nodal voltages are generally required to be \emph{feasible} in the sense that all the node voltages have magnitude close to 1 p.u.

In this paper, we rely on an equivalent formulation of this problem that is known as \emph{implicit $Z_{bus}$ formulation}, see e.g., \cite{Zbus}.
First, partition the admittance matrix $Y$ as
\begin{equation} \label{eqn:Yll}
Y=
\left[
\begin{array}{cc}
Y_{00} & Y_{0L}\\
Y_{L0} & Y_{LL}
\end{array}
\right],
\end{equation}
where $Y_{00}$ is a number, $Y_{0L}$ is a $1\times N$ row vector, $Y_{L0}$ is an $N\times1$ column vector, $Y_{LL}$ is an $N\times N$ matrix. Now, we claim that $Y_{LL}$ is an invertible matrix. This fact was mentioned, e.g., in \cite{Bolo}, without a proof; in Appendix \ref{sec:app} we give a proof that covers a broad range of distribution networks.
The implicit $Z_{bus}$ formulation is then given by the following proposition; for completeness, we also provide a short proof.
\begin{prop} \label{prop:equiv_LF}
The solution $v$ to the original load-flow problem can be found by solving the following fixed point equation
\begin{equation} \label{eqn:equiv_PFE}
v = w + Y_{LL}^{-1} \diag(\overline{v})^{-1}\overline{s}\triangleq G(v),
\end{equation}
\begin{equation} \label{eqn:zero_load}
\text{where} \qquad\qquad w \triangleq -Y_{LL}^{-1}Y_{L0}\hspace{3cm}
\end{equation}
is given and is equal to the vector of complex voltages when power injections are zero (\emph{zero-load voltage} of the grid).
\end{prop}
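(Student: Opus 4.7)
The plan is to derive \eqref{eqn:equiv_PFE} by eliminating the nodal currents between the two block equations \eqref{eqn:pfe1} and \eqref{eqn:pfe2}, and then to separately verify the zero-load interpretation of $w$. The strategy is purely block-matrix manipulation; the only external ingredient is the invertibility of $Y_{LL}$, which is established in Appendix \ref{sec:app}.

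For the main equivalence, I would first read off the lower block of \eqref{eqn:pfe1}, $\overline{s} = \diag(\overline{v})\, i$, which rearranges to $i = \diag(\overline{v})^{-1}\overline{s}$ under the standing feasibility assumption that no component of $v$ vanishes. Next, using the partition \eqref{eqn:Yll}, the lower block row of \eqref{eqn:pfe2} reads $i = Y_{L0} + Y_{LL}\, v$; the upper row, $i_0 = Y_{00} + Y_{0L}\, v$, merely determines $i_0$ (and hence $s_0$ via the first row of \eqref{eqn:pfe1}) once $v$ is known, and so plays no role in the determination of $v$. Equating the two expressions for $i$ and applying $Y_{LL}^{-1}$, I obtain $v = -Y_{LL}^{-1} Y_{L0} + Y_{LL}^{-1} \diag(\overline{v})^{-1}\overline{s}$, which is exactly \eqref{eqn:equiv_PFE} once $w$ is identified with $-Y_{LL}^{-1} Y_{L0}$ as in \eqref{eqn:zero_load}. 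The reverse implication follows by reading the same chain of equalities backwards and recovering $i$, $i_0$, and $s_0$ through \eqref{eqn:pfe2} and the first row of \eqref{eqn:pfe1}. For the zero-load identity, setting $s = 0$ in the lower block of \eqref{eqn:pfe1} forces $i = 0$ (again using that $v$ has no zero components), whereupon the lower block of \eqref{eqn:pfe2} collapses to $Y_{L0} + Y_{LL}\, v = 0$, so that $v = -Y_{LL}^{-1} Y_{L0} = w$.

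I do not anticipate a serious obstacle. The derivation is essentially linear-algebraic bookkeeping on a $2\times 2$ block decomposition together with a single diagonal inversion, and I expect to write it out in a few lines. The only delicate ingredient is the invertibility of $Y_{LL}$, which is not entirely obvious once shunts and meshed topologies are admitted, but this is precisely what the authors isolate in Appendix \ref{sec:app} and can be invoked here as a black box.
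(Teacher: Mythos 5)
Your proposal is correct and follows essentially the same route as the paper: read $i=\diag(\overline{v})^{-1}\overline{s}$ from the lower block of \eqref{eqn:pfe1}, equate it with $i=Y_{L0}+Y_{LL}v$ from \eqref{eqn:pfe2}, and apply $Y_{LL}^{-1}$ (whose invertibility is deferred to Appendix \ref{sec:app}). You are somewhat more explicit than the paper about the reverse implication, the recovery of $s_0$, and the non-vanishing of the components of $v$ needed to invert $\diag(\overline{v})$, but these are refinements of the same argument, not a different one.
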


\begin{proof}
By \eqref{eqn:pfe2} and \eqref{eqn:Yll}, we have that
\[
i = Y_{L0} + Y_{LL}v.
\]
Thus, clearly, $w \triangleq -Y_{LL}^{-1}Y_{L0}$ is the zero-load voltage of the grid. From \eqref{eqn:pfe1},
\[
i = \diag(\overline{v})^{-1}\overline{s}
\]
\[
\text{and hence} \qquad\quad Y_{LL}^{-1}\diag(\overline{v})^{-1}\overline{s}= -w + v, \hspace{6cm}
\]
which completes the proof.
\end{proof}
\begin{rem}
 This formulation can be viewed as a direct result of the superposition theorem: $v$ is the superposition of the voltages $w$, resulting from current injections by the slack bus when all other injections are absent ($s_i=0$, $i=1...N$) plus the voltages resulting from current injections due to $s$ when the slack bus injection is absent.
\end{rem}

In the subsequent sections, we propose and prove sufficient conditions under which there exists a unique feasible solution to \eqref{eqn:equiv_PFE}, which can be found by the iteration
\begin{equation} \label{eqn:iter_v}
v^{(k+1)} = w + Y_{LL}^{-1} \diag(\overline{v}^{(k)})^{-1}\overline{s}.
\end{equation}

\section{Main Result} \label{sec:res}
In this section, we give conditions on the complex power injections $s$ 
which guarantee that iteration \eqref{eqn:iter_v} converges to the unique feasible solution $v$ of the load-flow problem.
We also provide computational complexity of the method.

Before presenting our method formally, we give a high-level outline. First, we assume the knowledge of a pair $(\hat{v},\hat{s})$ that satisfies the load-flow equations \eqref{eqn:equiv_PFE}. This pair can be interpreted as the current (actual) state of the grid obtained via a measurement and state estimation process. In addition, we are given a desired ``next'' power setpoint $s$. Our conditions are thus formulated in terms of $(\hat{v},\hat{s})$ and $s$, and guarantee the unique feasible solution $v$ to \eqref{eqn:equiv_PFE} which is ``close'' to $\hat{v}$. Finally, we provide conditions on the starting point $v^{(0)}$ from which this solution can be computed using iteration \eqref{eqn:iter_v}.

As mentioned in the introduction, such a procedure is especially useful in the modern ADN setup, where the electrical state is continuously estimated and is varying slowly from its current value. In case there is no knowledge of the current state, a trivial choice for $(\hat{v},\hat{s})$ is $(w, \textbf{0})$, where $w$ is the zero-load voltage profile \eqref{eqn:zero_load}. For details, see Corollary \ref{cor:zero_load} below.

\subsection{Main Theorem}
%

We introduce some further notation. Let $W \triangleq \diag (w)$ and 
set
\begin{equation}
\xi(s) \triangleq \| W^{-1}Y_{LL}^{-1}\overline{W}^{-1}\diag(\overline{s}) \|_{\infty},
\end{equation}
where, for any complex matrix $A$,
$$\|A \|_{\infty} \triangleq \max_i \sum_j |A_{ij}|$$
denotes the matrix norm induced by the $\ell_{\infty}$ norm. Let
\begin{equation} \label{eqn:umin}
u_{min} \triangleq \min_{j} \left | \hat{v}_j / w_j \right |.
\end{equation}
Below is our main result. Its proof is in Section \ref{sec:proofs}.
\begin{thm} \label{thm:main}
Let the pair $(\hat{v},\hat{s})$ be a known solution to the load-flow problem \eqref{eqn:equiv_PFE}. Consider some other candidate complex power injection $s$. Assume that
\begin{equation} \label{eqn:cond1}
\xi(\hat{s}) < u_{min}^2
\end{equation}
\begin{equation} \label{eqn:cond2}
\text{and}\qquad\Delta \triangleq \left(u_{min}-\frac{\xi(\hat{s})}{u_{min}}\right)^2-4\xi(s-\hat{s}) > 0.
\end{equation}
Then there exists a unique solution $v$ to the load-flow problem, such that the pair $(v, s)$ satisfies \eqref{eqn:equiv_PFE} and $v$ belongs to
\[
\mathcal{D} \triangleq \{v: \abs{v_i-\hat{v}_i}\leq \rho\abs{w_i}\}
\]
\[
\text{with}\qquad\qquad\rho \triangleq  \frac{\left(u_{min}-\frac{\xi(\hat{s})}{u_{min}}\right)-\sqrt{\Delta}}2.\hspace{6cm}
\]
Moreover, this solution can be reached using the iterative procedure \eqref{eqn:iter_v} by starting with any $v^{(0)} \in \mathcal{D}$.
\end{thm}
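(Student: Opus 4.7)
My plan is to apply the Banach fixed-point theorem to the map $G$ of Proposition~\ref{prop:equiv_LF} on the closed set $\mathcal{D}$. To clean up the geometry, I would first switch to the normalized variable $u = W^{-1}v$; the set $\mathcal{D}$ becomes a plain $\ell_{\infty}$ ball $\mathcal{D}_u = \{u : \|u - \hat{u}\|_{\infty} \leq \rho\}$ around $\hat{u} = W^{-1}\hat{v}$, and the iteration map takes the clean form $\tilde{G}(u) = \mathbf{1} + C\,\diag(\overline{u})^{-1}\overline{s}$ with $C \triangleq W^{-1}Y_{LL}^{-1}\overline{W}^{-1}$, so that $\xi(s) = \|C\,\diag(\overline{s})\|_{\infty}$ by definition. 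Any $u \in \mathcal{D}_u$ satisfies the crude but crucial lower bound $|u_j| \geq u_{min} - \rho$.

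For the self-map step $\tilde{G}(\mathcal{D}_u) \subseteq \mathcal{D}_u$, I would use the fixed-point identity $\hat{u} - \mathbf{1} = C\diag(\overline{\hat{u}})^{-1}\overline{\hat{s}}$ and split
\[
\tilde{G}(u) - \hat{u} = C\diag(\overline{u})^{-1}(\overline{s} - \overline{\hat{s}}) + C\bigl[\diag(\overline{u})^{-1} - \diag(\overline{\hat{u}})^{-1}\bigr]\overline{\hat{s}},
\]
bounding the two terms via $\xi(s-\hat{s})$ and $\xi(\hat{s})$ respectively, together with $|u_j| \geq u_{min} - \rho$ and $|\hat{u}_j| \geq u_{min}$. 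Multiplying through by $(u_{min}-\rho)u_{min}$, the requirement $\|\tilde{G}(u) - \hat{u}\|_{\infty} \leq \rho$ collapses exactly to the quadratic $\rho^2 - a\rho + \xi(s-\hat{s}) \leq 0$ with $a \triangleq u_{min} - \xi(\hat{s})/u_{min}$. By construction, the stated $\rho = (a - \sqrt{\Delta})/2$ is the smaller root of this quadratic, so the inequality holds (with equality); condition \eqref{eqn:cond1} ensures $a > 0$ and \eqref{eqn:cond2} ensures the roots are real.

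For the contraction step, the algebraic identity $1/\overline{u_{1j}} - 1/\overline{u_{2j}} = (\overline{u_{2j}} - \overline{u_{1j}})/(\overline{u_{1j}}\overline{u_{2j}})$ gives at once a Lipschitz constant of $\xi(s)/(u_{min}-\rho)^2$ on $\mathcal{D}_u$. Combining the trivial sub-additivity $\xi(s) \leq \xi(\hat{s}) + \xi(s-\hat{s})$ (inherited from the operator norm) with the identity $\xi(s-\hat{s}) = a\rho - \rho^2$ extracted from the previous paragraph, this constant is strictly less than $1$ exactly when $\rho < a/2$, which is automatic from $\rho = (a - \sqrt{\Delta})/2$ whenever $\Delta > 0$.

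With self-map and strict contraction in hand on the closed set $\mathcal{D}_u$, the Banach fixed-point theorem gives a unique fixed point of $\tilde{G}$ in $\mathcal{D}_u$ and convergence of iteration \eqref{eqn:iter_v} (in normalized form) from any starting point in $\mathcal{D}_u$; undoing the scaling by $v = Wu$ transcribes this into the original statement. The main obstacle I anticipate is really just algebraic bookkeeping: picking the right decomposition so that the self-map inequality collapses \emph{exactly} onto the quadratic whose smaller root is the stated $\rho$, and simultaneously verifying that the denominators $u_{min}$ and $u_{min} - \rho$ stay safely positive (which follow from \eqref{eqn:cond1} and from $\rho < a/2 \leq u_{min}/2$, respectively).
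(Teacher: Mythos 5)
Your proposal is correct and follows essentially the same route as the paper: normalize by $W$, show $\tilde{G}$ is a self-map of the $\ell_\infty$-ball of radius $\rho$ via the same quadratic $r^2 - ar + \xi(s-\hat{s}) \leq 0$ with $a = u_{min} - \xi(\hat{s})/u_{min}$, establish the Lipschitz constant $\xi(s)/(u_{min}-\rho)^2$, and invoke the Banach fixed-point theorem. The only (cosmetic) difference is in the contraction step, where you bound the difference quotient $1/\overline{u^1_j} - 1/\overline{u^2_j}$ directly and check that the constant is below $1$ via the identity $\xi(s-\hat{s}) = a\rho - \rho^2$, whereas the paper integrates the $\mathbb{R}$-linear differential of $\tilde{G}$ along the segment joining $u^1$ and $u^2$ and uses the rewriting $\Delta = \left(u_{min}+\frac{\xi(\hat{s})}{u_{min}}\right)^2 - 4\left(\xi(\hat{s})+\xi(s-\hat{s})\right)$; both yield the same bound.
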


In case there is no knowledge of the current state $(\hat{v},\hat{s})$, the following corollary can be used.
\begin{cor} \label{cor:zero_load}
Suppose that the complex power $s$ satisfies $\xi(s) < 0.25$. Then, there exists a unique solution $v$
to the load-flow problem, such that the pair $(v, s)$ satisfies \eqref{eqn:equiv_PFE} and $v$ belongs to
\[
\mathcal{D}' \triangleq \left \{v: \abs{v_i-w_i}\leq \frac{(1 - \sqrt{1 - 4\xi(s)})\abs{w_i}}2 \right \}.
\]
This solution can be reached using the iterative procedure in \eqref{eqn:iter_v} by starting with any $v^{(0)} \in \mathcal{D}'$.
\end{cor}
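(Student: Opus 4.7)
The plan is to derive this corollary as a direct specialization of Theorem \ref{thm:main}, by taking the known solution pair to be the trivial zero-load pair $(\hat{v},\hat{s}) = (w, \mathbf{0})$. First I would check that this pair is indeed a valid solution to \eqref{eqn:equiv_PFE}: with $\hat{s} = \mathbf{0}$, the right-hand side $w + Y_{LL}^{-1}\diag(\overline{v})^{-1}\overline{\hat{s}}$ collapses to $w$, so $(w,\mathbf{0})$ trivially satisfies the load-flow equation. This makes the corollary well-posed as an instance of the main theorem.

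Next I would evaluate the quantities appearing in Theorem \ref{thm:main} at this choice. Since $\hat{v}_j = w_j$ for every $j$, definition \eqref{eqn:umin} gives $u_{min} = 1$. Moreover, $\xi(\hat{s}) = \xi(\mathbf{0}) = 0$, because $\diag(\overline{\hat{s}})$ is the zero matrix and the induced $\ell_\infty$ norm of the zero matrix is zero. Substituting these into conditions \eqref{eqn:cond1} and \eqref{eqn:cond2}: the first becomes $0 < 1$, which holds automatically, and the second becomes
\[
\Delta = 1 - 4\xi(s) > 0,
\]
which is exactly the hypothesis $\xi(s) < 0.25$ of the corollary.

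With these substitutions, the radius $\rho$ from Theorem \ref{thm:main} simplifies to
\[
\rho = \frac{1 - \sqrt{1 - 4\xi(s)}}{2},
\]
and the domain $\mathcal{D} = \{v: |v_i - \hat{v}_i| \le \rho |w_i|\}$ becomes precisely $\mathcal{D}' = \{v: |v_i - w_i| \le \rho |w_i|\}$. So Theorem \ref{thm:main} applied to $(w,\mathbf{0})$ and $s$ yields at once existence and uniqueness of a solution $v \in \mathcal{D}'$ together with convergence of the iteration \eqref{eqn:iter_v} from any starting point in $\mathcal{D}'$, which is the statement of the corollary.

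There is no substantive obstacle here: all the work is in Theorem \ref{thm:main}, and the only thing to be careful about is making the identifications $u_{min} = 1$ and $\xi(\hat{s}) = 0$ explicitly so that the simplification of $\rho$ and of the domain is unambiguous. I would therefore present the proof simply as \emph{Apply Theorem \ref{thm:main} with $(\hat v,\hat s) = (w,\mathbf{0})$}, followed by these two one-line computations.
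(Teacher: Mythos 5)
Your proposal is correct and follows exactly the same route as the paper's own proof: specialize Theorem \ref{thm:main} to $(\hat v,\hat s)=(w,\mathbf{0})$, note $\xi(\mathbf{0})=0$ and $u_{min}=1$, and observe that the conditions and the radius $\rho$ reduce to the stated forms. Your extra remark that $(w,\mathbf{0})$ indeed satisfies \eqref{eqn:equiv_PFE} is a small but welcome addition of rigor.
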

 \begin{proof}
 We use Theorem \ref{thm:main} with the choice $\hat{v} = w$ and $\hat{s} = \textbf{0}$. In this case, as $\xi(\textbf{0}) = 0$, condition \eqref{eqn:cond1} is always satisfied. Also, as $u_{min} = 1$, condition \eqref{eqn:cond2} becomes $\xi(s) < 0.25$ and $\rho$ is given by $(1 - \sqrt{1 - 4\xi(s)})/2$.
 \end{proof}
We demonstrate the numerical utility of choosing either the method of Theorem \ref{thm:main} or that of Corollary \ref{cor:zero_load} in Section \ref{sec:num}.

\subsection{Comparison with Existing Results}
In \cite{Bolo}, the following sufficient condition for the unique solution of the load-flow problem was given: $\exists p\in [1,\infty]$ and $q = p/(p - 1)$ such that
\begin{equation} \label{eqn:bol}
\| W^{-1}Y_{LL}^{-1}\overline{W}^{-1} \|_{p}^{*}\| s \|_{q}<0.25,
\end{equation}
where, for any matrix $A$, $\|A \|^*_p \triangleq  \max_{h} \|A_{h \bullet }\|_p$, and the notation $A_{h\bullet}$ stands for the $h$-th row of $A$. This work has been improved in \cite{ImproveBolo} as follows: $\exists p\in [1,\infty]$, $q = p/(p - 1)$,  and a real-valued diagonal matrix $\Lambda$ such that
\begin{equation} \label{eqn:yu}
\| W^{-1}Y_{LL}^{-1}\overline{W}^{-1}\Lambda \|_{p}^{*}\| \Lambda^{-1}s \|_{q}<0.25.
\end{equation}
We next show that our condition is weaker (thus the result is stronger). Since no knowledge of the current electrical state is assumed in both \cite{Bolo} and \cite{ImproveBolo}, we compare it with the condition of Corollary \ref{cor:zero_load}. By Holder's inequality with $\frac{1}{p}+\frac{1}{q}=1$,
\begin{small}
\begin{align}
  \begin{split}
    &\xi(s)=\| W^{-1}Y_{LL}^{-1}\overline{W}^{-1}\diag(\overline{s}) \|_\infty\\
    = & \max_{i} \sum_{j} \abs{{\left( W^{-1}Y_{LL}^{-1}\overline{W}^{-1}\Lambda \right)}_{ij}}\abs{{\left( \Lambda^{-1}s \right)}_j}\\
    = & \sum_{j} \abs{{\left( W^{-1}Y_{LL}^{-1}\overline{W}^{-1}\Lambda \right)}_{i_{max}j}}\abs{{\left( \Lambda^{-1}s \right)}_{j}}\\
    \leq & \| {\left( W^{-1}Y_{LL}^{-1}\overline{W}^{-1}\Lambda \right)}_{i_{max}\bullet} \|_{p}\| \Lambda^{-1}s \|_{q}\\
     \leq & \| W^{-1}Y_{LL}^{-1}\overline{W}^{-1}\Lambda \|_{p}^{*}\| \Lambda^{-1}s \|_{q}.
  \end{split}
\end{align}
\end{small}
Thus, whenever \eqref{eqn:bol} or \eqref{eqn:yu} is satisfied, we have that $\xi(s) < 0.25$, hence the hypothesis of Corollary \ref{cor:zero_load} is satisfied. We complement this result in  \sref{sec-illu} by showing that the converse is not true.
\vspace*{-0.25cm}
\subsection{Computational Complexity}
\textit{1) The complexity of one iteration}: In general, each iteration of \eqref{eqn:iter_v} can be computed either directly or through solving linear equations. Such procedures usually require $O(N^2)$ computational complexity for a general linear system. But our experience shows the computational complexity can approximately be $O(N)$ if using LU decomposition with complete Markowitz pivoting \cite{Markowitz}. This is because the nodal admittance matrices are structurally sparse and symmetric in general, for which the pivoting reduces the number of fill-ins and preserve the sparsity in LU decomposition \cite{LU1}, \cite{LU2}.

For radial distribution networks, a similar decomposition is given in \cite{Lisboa} by exploiting the grid structure from a graph-theoretic perspective. Such decomposition guarantees $O(N)$ computational complexity for these cases under proper hypothesis.

\textit{2) The complexity of checking conditions}: Generally, complexity of checking conditions is mainly the complexity of computing $\xi(\hat{s})$ and $\xi(s-\hat{s})$, which is $O(N^2)$. But for networks where the decomposition in \cite{Lisboa} applies, this complexity can be reduced to $O(N)$ by only computing and comparing the rows that correspond to \emph{leaf} nodes.

\section{Proof of Theorem \ref{thm:main}} \label{sec:proofs}
For the purpose of the proof, we find it useful to parametrize \eqref{eqn:equiv_PFE} in a different way. Let $u \triangleq W^{-1} v$ denote the \emph{normalized} voltage with respect to an unloaded grid. Then, it is easy to see that \eqref{eqn:equiv_PFE} is equivalent to
\begin{equation} \label{eqn:equiv_PFE_u}
u = \bm{1} + W^{-1}Y_{LL}^{-1}\overline{W}^{-1}\diag(\overline{u})^{-1}\overline{s}\triangleq \tilde{G}(u),
\end{equation}
where $\bm{1}=(1,1,...,1)^T$ is the unity vector.
Clearly, any conditions on $u$ provide corresponding conditions on $v$ using the invertible mapping $v = W u$. We thus perform the analysis of \eqref{eqn:equiv_PFE_u} and the corresponding iteration
\begin{equation} \label{eqn:iter_u}
u^{(k+1)} = \bm{1} + W^{-1}Y_{LL}^{-1}\overline{W}^{-1}\diag(\overline{u}^{(k)})^{-1}\overline{s}.
\end{equation}

From the Banach fixed point theorem \cite{Banach}, if the operator $\tilde{G}$ is a contraction mapping on a metric space $(\tilde{\mathcal{D}},\tilde{d})$,
then there is a unique fixed point $u^{*}$ in $\tilde{\mathcal{D}}$. Moreover, $u^{*}$ can be reached by iterative update of $u^{(k+1)}=\tilde{G}(u^{(k)})$ from an arbitrary $u^{(0)}$ in $\tilde{\mathcal{D}}$. In the rest of this section, we show that under the conditions of Theorem \ref{thm:main}, operator $\tilde{G}$ is a contraction mapping in the sense that (i) $\tilde{G}$ is a self-mapping of $u$ on a closed set $\tilde{\mathcal{D}}$
, and (ii) $\tilde{G}$ has the contraction property: $\|\tilde{G}(u^2)-\tilde{G}(u^1)\|_\infty<\|u^2-u^1\|_\infty$ for any $u^1,u^2\in\tilde{\mathcal{D}}$.
\subsection{Proof of self-mapping}
\begin{lem}
Suppose that the pair $(\hat{v},\hat{s})$ and the complex power $s$ satisfy \eqref{eqn:cond1}
and \eqref{eqn:cond2}.
Then $\tilde{G}$ is a self-mapping of $u$ on
\begin{equation} \label{eqn:D_tilde}
\tilde{\mathcal{D}} \triangleq \{u: \abs{u_i-\hat{u}_i}\leq \rho\}
\end{equation}
\[
\text{with\qquad\qquad}\rho = \frac{\left(u_{min}-\frac{\xi(\hat{s})}{u_{min}}\right)-\sqrt{\Delta}}2\hspace{3cm}
\]
and $\hat{u}_i=\hat{v}_i / w_i$.
\end{lem}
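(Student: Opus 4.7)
The plan is to show directly that $\|\tilde{G}(u)-\hat{u}\|_\infty\le\rho$ for every $u\in\tilde{\mathcal{D}}$, exploiting the fact that $\hat{u}\triangleq W^{-1}\hat{v}$ is itself a fixed point of the analogous operator with power input $\hat{s}$, i.e.\ $\hat{u}=\mathbf{1}+M\diag(\overline{\hat{u}})^{-1}\overline{\hat{s}}$, where $M\triangleq W^{-1}Y_{LL}^{-1}\overline{W}^{-1}$. Subtracting this from the definition of $\tilde{G}(u)$ gives
\[
\tilde{G}(u)-\hat{u}=M\bigl[\diag(\overline{u})^{-1}\overline{s}-\diag(\overline{\hat{u}})^{-1}\overline{\hat{s}}\bigr].
\]

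Next I would split this into a ``state error'' piece and an ``input change'' piece by adding and subtracting $M\diag(\overline{u})^{-1}\overline{\hat{s}}$:
\[
\tilde{G}(u)-\hat{u}=M\bigl[\diag(\overline{u})^{-1}-\diag(\overline{\hat{u}})^{-1}\bigr]\overline{\hat{s}}+M\diag(\overline{u})^{-1}(\overline{s}-\overline{\hat{s}}).
\]
For the first piece, $\bigl(\tfrac{1}{\overline{u}_j}-\tfrac{1}{\overline{\hat{u}}_j}\bigr)=\tfrac{\overline{\hat{u}}_j-\overline{u}_j}{\overline{u}_j\overline{\hat{u}}_j}$, so its $i$-th entry is a sum of terms $M_{ij}\cdot\tfrac{\overline{\hat{u}}_j-\overline{u}_j}{\overline{u}_j\overline{\hat{u}}_j}\cdot\overline{\hat{s}}_j$. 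Using $|\hat{u}_j|\ge u_{min}$, $|u_j-\hat{u}_j|\le\rho$, and the triangle-inequality lower bound $|u_j|\ge u_{min}-\rho$ (which needs $\rho<u_{min}$; this I would verify as a preliminary step, since $\rho<\tfrac{1}{2}(u_{min}-\xi(\hat{s})/u_{min})<u_{min}/2$), one gets the entrywise bound $\tfrac{\rho}{(u_{min}-\rho)u_{min}}\sum_j |M_{ij}||\hat{s}_j|$. The second piece similarly yields $\tfrac{1}{u_{min}-\rho}\sum_j|M_{ij}||s_j-\hat{s}_j|$. Taking the max over $i$ and recognizing $\max_i\sum_j|M_{ij}||a_j|=\|M\diag(\overline{a})\|_\infty=\xi(a)$ converts these bounds into $\xi(\hat{s})$ and $\xi(s-\hat{s})$.

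Combining gives the requirement
\[
\frac{\rho\,\xi(\hat{s})}{(u_{min}-\rho)u_{min}}+\frac{\xi(s-\hat{s})}{u_{min}-\rho}\le\rho,
\]
which, after clearing denominators, becomes the quadratic inequality
\[
\rho^2-\rho\Bigl(u_{min}-\frac{\xi(\hat{s})}{u_{min}}\Bigr)+\xi(s-\hat{s})\le 0.
\]
Here condition \eqref{eqn:cond1} ensures the linear coefficient is negative, condition \eqref{eqn:cond2} ensures the discriminant $\Delta$ is positive, and the $\rho$ defined in the lemma is precisely the smaller root of this quadratic, so the inequality holds (with equality in the worst case). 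This closes the self-mapping proof.

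The main obstacle is not any single manipulation but rather seeing in advance that the natural entrywise bound collapses to a quadratic whose smaller root is exactly the radius $\rho$ one wants; once that algebraic structure is anticipated, the definition of $\rho$, and hence the shape of the hypotheses \eqref{eqn:cond1}--\eqref{eqn:cond2}, become inevitable. A secondary subtlety is the a priori verification that $\rho<u_{min}$, which is needed so that $\diag(\overline{u})^{-1}$ is well defined on $\tilde{\mathcal{D}}$ and so that the denominator $u_{min}-\rho$ is positive; this follows from \eqref{eqn:cond1} and the definition of $\rho$ before any of the main estimates are carried out.
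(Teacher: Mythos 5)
Your proposal is correct and follows essentially the same route as the paper's proof: the same decomposition of $\tilde{G}(u)-\hat{u}$ into the $\overline{\hat{s}}_j(\overline{\hat{u}}_j-\overline{u}_j)/(\overline{u}_j\overline{\hat{u}}_j)$ and $(\overline{s}_j-\overline{\hat{s}}_j)/\overline{u}_j$ terms, the same bounds via $u_{min}$ and $u_{min}-\rho$, and the same quadratic $r^2-(u_{min}-\xi(\hat{s})/u_{min})r+\xi(s-\hat{s})\le 0$ whose smaller root is $\rho$. The only cosmetic difference is that the paper works with a generic radius $r$ and exhibits an interval of admissible radii before selecting $\rho$, whereas you substitute $\rho$ directly; your preliminary check that $\rho<u_{min}$ is a worthwhile explicit addition.
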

\begin{proof}
Since $(\hat{v},\hat{s})$ satisfies the power flow equation \eqref{eqn:equiv_PFE}, we have that $\hat{u}=\bm{1}+W^{-1}Y_{LL}^{-1}\overline{W}^{-1}\diag(\overline{\hat{u}})^{-1}\overline{\hat{s}}$ in addition to \eqref{eqn:equiv_PFE_u}.
Thus,
\begin{small}
\begin{equation}
\tilde{G}(u)-\hat{u}=W^{-1}Y_{LL}^{-1}\overline{W}^{-1}\left(\diag(\overline{u})^{-1}\overline{s}-\diag(\overline{\hat{u}})^{-1}\overline{\hat{s}}\right). \nonumber
\end{equation}
\end{small}
%
Our goal is to show that there exists a radius $r$ such that if $\abs{u_{i}-\hat{u}_{i}} \leq r$ then $\abs{\tilde{G}(u)_{i}-\hat{u}_{i}}\leq r$ for all $i$.
We have

\begin{small}
\begin{align*}
  \begin{split}
    &\abs{{\tilde{G}(u)}_{i}-\hat{u}_{i}}
    = \abs{\sum_{j} {\left(W^{-1}Y_{LL}^{-1}\overline{W}^{-1}\right)}_{ij} \left(\frac{\overline{s}_j}{\overline{u}_j}-\frac{\overline{\hat{s}}_j}{\overline{\hat{u}}_{j}}\right)}\\
    \leq & \sum_{j} \abs{{\left(W^{-1}Y_{LL}^{-1}\overline{W}^{-1}\right)}_{ij}\frac{\overline{\hat{s}}_j(\overline{\hat{u}}_j-\overline{u}_j)+\overline{\hat{u}}_j(\overline{s}_j-\overline{\hat{s}}_j)}{\overline{u}_j\overline{\hat{u}}_j}}\\
    \leq & \sum_{j} \abs{{\left(W^{-1}Y_{LL}^{-1}\overline{W}^{-1}\right)}_{ij}\overline{\hat{s}}_j\frac{(\overline{\hat{u}}_j-\overline{u}_j)}{\overline{u}_j\overline{\hat{u}}_j}}\\
    & +\sum_{j} \abs{{\left(W^{-1}Y_{LL}^{-1}\overline{W}^{-1}\right)}_{ij}\frac{(\overline{s}_j-\overline{\hat{s}}_j)}{\overline{u}_j}}. \nonumber
  \end{split}
\end{align*}
\end{small}

Now, assume that $\abs{u_{i}-\hat{u}_{i}} \leq r < u_{min}$, where $u_{min}$ is given in \eqref{eqn:umin}. Also, by the definition of $u_{min}$, we have that
$\abs{\hat{u}_j}\geq u_{min}$. Therefore, $\abs{\overline{u}_j}\geq u_{min}-r$, and
\begin{scriptsize}
\begin{align}
  \begin{split}
    & \sum_{j} \abs{{\left(W^{-1}Y_{LL}^{-1}\overline{W}^{-1}\right)}_{ij}\overline{\hat{s}}_j\frac{(\overline{\hat{u}}_j-\overline{u}_j)}{\overline{u}_j\overline{\hat{u}}_j}}\\
    \leq & \sum_{j} \abs{{\left(W^{-1}Y_{LL}^{-1}\overline{W}^{-1}\right)}_{ij}\overline{\hat{s}}_j}\frac{r}{(u_{min}-r)u_{min}}
    \leq \frac{\xi(\hat{s})r}{(u_{min}-r)u_{min}}. \nonumber
  \end{split}
\end{align}
\end{scriptsize}
Similarly,
\begin{scriptsize}
\begin{align}
  \begin{split}
    & \sum_{j} \abs{{\left(W^{-1}Y_{LL}^{-1}\overline{W}^{-1}\right)}_{ij}\frac{(\overline{s}_j-\overline{\hat{s}}_j)}{\overline{u}_j}}\\
    \leq & \sum_{j} \abs{{\left(W^{-1}Y_{LL}^{-1}\overline{W}^{-1}\right)}_{ij}(\overline{s}_j-\overline{\hat{s}}_j)}\frac{1}{(u_{min}-r)}
    \leq \frac{\xi(s-\hat{s})}{(u_{min}-r)}. \nonumber
  \end{split}
\end{align}
\end{scriptsize}
Combine them and obtain
\begin{equation}
\abs{\tilde{G}(u)_{i}-\hat{u}_{i}}\leq \frac{\xi(\hat{s})r}{(u_{min}-r)u_{min}}+ \frac{\xi(s-\hat{s})}{(u_{min}-r)}.
\end{equation}
Therefore, we have a self-mapping if
\begin{equation} \label{eqn:selfmap1}
\frac{\xi(\hat{s})r}{(u_{min}-r)u_{min}}+ \frac{\xi(s-\hat{s})}{(u_{min}-r)} \leq r.
\end{equation}
It can be re-organized as
\begin{equation} \label{eqn:selfmap2}
r^2-\left(u_{min}-\frac{\xi(\hat{s})}{u_{min}}\right)r+\xi(s-\hat{s})\triangleq f(r) \leq 0.
\end{equation}
We thus have shown that 
$\tilde{G}$ is a self-mapping if there exists an $r\in(0,u_{min})$ such that $f(r)\leq 0$. Since $f(r)$ is a convex polynomial of degree two and $f(0)=\xi(s-\hat{s})>0$, we know there is an interval of such $r$ if (i) the axis of symmetry $\left(u_{min}-\frac{\xi(\hat{s})}{u_{min}}\right)/2>0$ and (ii) the discriminant $\Delta = \left(u_{min}-\frac{\xi(\hat{s})}{u_{min}}\right)^2-4\xi(s-\hat{s}) > 0$. These two conditions are exactly \eqref{eqn:cond1} and \eqref{eqn:cond2}.

%
By now, the satisfaction of \eqref{eqn:cond1} and \eqref{eqn:cond2} gives an interval of $r$. But we are interested in the smallest possible value of $r$, which is given by $\rho = \left(\left(u_{min}-\frac{\xi(\hat{s})}{u_{min}}\right)-\sqrt{\Delta}\right)/2$ since it provides a better description of locality for load-flow solution. This completes the proof of the Lemma.
\end{proof}
\begin{rem}
Equivalently, $G$ is a self-mapping of $v$ on $\mathcal{D}$.
\end{rem}

\subsection{Proof of contraction mapping}

\begin{lem}
Suppose that the pair $(\hat{v},\hat{s})$ and the complex power $s$ satisfy \eqref{eqn:cond1}
and \eqref{eqn:cond2}.
Then $\tilde{G}$ is a contraction mapping of $u$ on the metric space $(\tilde{\mathcal{D}},\tilde{d})$, where $\tilde{\mathcal{D}}$ is given in \eqref{eqn:D_tilde} and
$\tilde{d}$ is defined by the $\ell_\infty$ norm.
\end{lem}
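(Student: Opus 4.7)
The plan is to show that for any $u^1,u^2\in\tilde{\mathcal{D}}$ one has $\|\tilde{G}(u^2)-\tilde{G}(u^1)\|_\infty\le L\|u^2-u^1\|_\infty$ with an explicit constant $L<1$. First I would write the difference componentwise and put the two reciprocals over a common denominator, obtaining
\begin{equation*}
\tilde{G}(u^2)_i-\tilde{G}(u^1)_i=\sum_{j}\left(W^{-1}Y_{LL}^{-1}\overline{W}^{-1}\right)_{ij}\overline{s}_j\,\frac{\overline{u}^1_j-\overline{u}^2_j}{\overline{u}^1_j\,\overline{u}^2_j}.
\end{equation*}
Since every $u^k\in\tilde{\mathcal{D}}$ satisfies $|u^k_j|\ge|\hat{u}_j|-\rho\ge u_{min}-\rho$ and $|u^1_j-u^2_j|\le\|u^2-u^1\|_\infty$, taking absolute values inside the sum and maximizing over $i$ yields the clean bound
\begin{equation*}
\|\tilde{G}(u^2)-\tilde{G}(u^1)\|_\infty\le\frac{\xi(s)}{(u_{min}-\rho)^2}\|u^2-u^1\|_\infty.
\end{equation*}

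The crux is then to verify the strict inequality $\xi(s)<(u_{min}-\rho)^2$. I would first use sub-additivity $\xi(s)\le\xi(\hat{s})+\xi(s-\hat{s})$, which follows immediately from the triangle inequality for $\|\cdot\|_\infty$ together with the linearity of $\diag(\overline{s})$ in $s$. The key algebraic step is to exploit the fact that, by construction, $\rho$ is the smaller root of the quadratic $f(r)=r^2-(u_{min}-\xi(\hat{s})/u_{min})r+\xi(s-\hat{s})$ introduced in the self-mapping lemma. Substituting $f(\rho)=0$ and rearranging gives the tidy identity
\begin{equation*}
\xi(\hat{s})+\xi(s-\hat{s})=(u_{min}-\rho)\bigl(\rho+\xi(\hat{s})/u_{min}\bigr).
\end{equation*}
The required inequality therefore reduces to $\rho+\xi(\hat{s})/u_{min}<u_{min}-\rho$, i.e.\ $2\rho<u_{min}-\xi(\hat{s})/u_{min}$. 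Since $2\rho=(u_{min}-\xi(\hat{s})/u_{min})-\sqrt{\Delta}$ by definition, this is exactly $\sqrt{\Delta}>0$, which is hypothesis \eqref{eqn:cond2}. Consequently $L=\xi(s)/(u_{min}-\rho)^2<1$ and $\tilde{G}$ is a strict contraction on $(\tilde{\mathcal{D}},\tilde{d})$.

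The main obstacle I anticipate is not analytic but purely algebraic: cleanly tying $\xi(\hat{s})+\xi(s-\hat{s})$ back to $(u_{min}-\rho)^2$ through the defining quadratic $f$, so that the contraction condition collapses onto $\sqrt{\Delta}>0$ and no further hypothesis beyond \eqref{eqn:cond1}--\eqref{eqn:cond2} is needed. The remaining pieces---componentwise factoring, the denominator bound inherited from $\tilde{\mathcal{D}}$, and recognising the row-sum as the induced $\ell_\infty$-norm---are routine. Combined with the preceding self-mapping lemma, this contraction property lets Banach's fixed-point theorem deliver both uniqueness of the solution in $\tilde{\mathcal{D}}$ and convergence of iteration \eqref{eqn:iter_u} from any starting point in $\tilde{\mathcal{D}}$; the corresponding statement on $\mathcal{D}$ then follows from the invertible change of variables $v=Wu$.
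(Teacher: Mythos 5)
Your proof is correct, and it reaches the same contraction constant $L=\xi(s)/(u_{min}-\rho)^2$ as the paper, but by a genuinely different pair of arguments at both stages. For the Lipschitz bound, the paper parametrizes the segment $b(t)=u^1+t(u^2-u^1)$, differentiates $\tilde{G}$ as an $\mathbb{R}$-linear operator (the conjugates make $\tilde{G}$ non-holomorphic, which is why the paper must be careful about viewing $\mathbb{C}^N$ as a real vector space), and integrates $\|\tilde{G}'(b(t))\|_\infty$ along the path; you instead factor the difference of reciprocals directly, $\overline{s}_j(1/\overline{u}^2_j-1/\overline{u}^1_j)=\overline{s}_j(\overline{u}^1_j-\overline{u}^2_j)/(\overline{u}^1_j\overline{u}^2_j)$, which is more elementary, avoids the differential-calculus machinery entirely, and gives the same constant because $|\overline{u}^1_j\overline{u}^2_j|\ge(u_{min}-\rho)^2$ exactly matches the paper's bound $|b_j(t)|^2\ge(u_{min}-\rho)^2$. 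For the key inequality $\xi(s)<(u_{min}-\rho)^2$, the paper rewrites $\Delta$ as $(u_{min}+\xi(\hat{s})/u_{min})^2-4(\xi(\hat{s})+\xi(s-\hat{s}))>0$ and chains two strict inequalities through $\bigl((u_{min}+\xi(\hat{s})/u_{min})/2\bigr)^2$ and $\bigl((u_{min}+\xi(\hat{s})/u_{min}+\sqrt{\Delta})/2\bigr)^2=(u_{min}-\rho)^2$; you instead use the exact identity $f(\rho)=0$ to get $\xi(\hat{s})+\xi(s-\hat{s})=(u_{min}-\rho)(\rho+\xi(\hat{s})/u_{min})$ and reduce everything to $\sqrt{\Delta}>0$, which is arguably tidier and makes transparent that condition \eqref{eqn:cond2} is used with no slack. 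The only point worth making explicit in your write-up is that dividing by $u_{min}-\rho$ requires $u_{min}-\rho>0$, which follows from $u_{min}-\rho=(u_{min}+\xi(\hat{s})/u_{min}+\sqrt{\Delta})/2>0$ (or from $\rho\in(0,u_{min})$ established in the self-mapping lemma); this is trivial but should be stated.
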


\begin{proof}
As $\tilde{\mathcal{D}}$ is a convex set, there exists a straight path connecting any two points $u^1$ and $u^2$ in $\tilde{\mathcal{D}}$. Parameterize the path and denote it by $b$:  $b(t)=u^1+t(u^2-u^1)$ for $t \in [0,1]$. 
Then, we have the relation:
\begin{scriptsize}
\begin{align}
  \begin{split}
    \| \tilde{G}(u^2)-\tilde{G}(u^1) \|_\infty = & \| \tilde{G}(b(1))-\tilde{G}(b(0)) \|_\infty
    = \| \int_0^1 \frac{d\tilde{G}\left(b(t)\right)}{dt} dt \|_\infty. \nonumber
  \end{split}
\end{align}
\end{scriptsize}
By triangular inequality, it holds that
\begin{equation} \label{eqn:contrstart}
\| \tilde{G}(u^2)-\tilde{G}(u^1) \|_\infty \leq \int_0^1 \| \frac{d\tilde{G}\left(b(t)\right)}{dt} \|_\infty dt.
\end{equation}
We view $\mathbb{C}^N$ as an abstract vector space on $\mathbb{R}$ (i.e., of dimension $2N$), equipped with the norm $\|(z_1,...,z_N)\|_\infty\triangleq\max_{i=1}^N\abs{z_i}$. Note that this is a norm when we view $\mathbb{C}^N$ either as a $\mathbb{C}$-\emph{vector} space or an $\mathbb{R}$-\emph{vector} space. As shown in \cite{MatDiff},
\begin{equation}
\tilde{G}(b+h)=\tilde{G}(b)+\tilde{G}'(b)\cdot h+\|h\|_\infty\varepsilon(h) \quad \forall h\in \mathbb{C}^N, \nonumber
\end{equation}
where $\tilde{G}'(b): \mathbb{C}^N\rightarrow\mathbb{C}^N$, the differential operator of $\tilde{G}$ at $b$, is an $\mathbb{R}$-\emph{linear} operator, and ``$\cdot$'' denotes the action of this operator.
Then for the $\tilde{G}$ defined in \eqref{eqn:equiv_PFE_u}, we have
\begin{small}
\begin{equation}
\tilde{G}'(b)\cdot h=-W^{-1}Y_{LL}^{-1}\overline{W}^{-1}\diag\left(\frac{\overline{s}_1}{\overline{b}_1^2},...,\frac{\overline{s}_N}{\overline{b}_N^2}\right)\overline{h}. \nonumber
\end{equation}
\end{small}
So that, we continue the derivation in \eqref{eqn:contrstart} and obtain
\begin{scriptsize}
\begin{align} \label{eqn:contrmid1}
  \begin{split}
    & \| \tilde{G}(u^2)-\tilde{G}(u^1) \|_\infty
    \leq \int_0^1 \| \tilde{G}'\left(b(t)\right)\cdot\frac{db(t)}{dt} \|_\infty dt \\
    = & {\int_0^1} \| W^{-1}Y_{LL}^{-1}\overline{W}^{-1} {\diag\left(\frac{\overline{s}_1}{\overline{b}_1^2(t)},...,\frac{\overline{s}_N}{\overline{b}_N^2(t)}\right)}\frac{d\overline{b}(t)}{dt} \|_\infty dt \\
    \leq & \int_0^1\| W^{-1}Y_{LL}^{-1}\overline{W}^{-1}\diag\left(\frac{\overline{s}_1}{\overline{b}_1^2(t)},...,\frac{\overline{s}_N}{\overline{b}_N^2(t)}\right) \|_\infty \| u^2-u^1 \|_\infty dt.
  \end{split}
\end{align}
\end{scriptsize}
Since $b(t)$ is always in $\tilde{\mathcal{D}}$, we have $\abs{b_i(t)}\geq u_{min}-\rho$. Then, by sub-multiplicativity of matrix norm, there is
\begin{scriptsize}
\begin{align} \label{eqn:contrmid2}
  \begin{split}
    & \| W^{-1}Y_{LL}^{-1}\overline{W}^{-1}\diag\left(\frac{\overline{s}_1}{\overline{b}_1^2(t)},...,\frac{\overline{s}_N}{\overline{b}_N^2(t)}\right) \|_\infty \\
    \leq & \| W^{-1}Y_{LL}^{-1}\overline{W}^{-1}\diag\left(\overline{s}_1,...,\overline{s}_N\right) \|_\infty
    \| \diag \left( \overline{b}_1^2(t),...,\overline{b}_N^2(t) \right)^{-1} \|_\infty \\
    \leq & \frac{\xi(s)}{(u_{min}-\rho)^2}.
  \end{split}
\end{align}
\end{scriptsize}
Further, observe that from \eqref{eqn:cond2},
\begin{small}
\begin{align}
  \begin{split}
    \Delta= & (u_{min}-\frac{\xi(\hat{s})}{u_{min}})^2-4\xi(s-\hat{s})\\
    = & (u_{min}+\frac{\xi(\hat{s})}{u_{min}})^2 - 4(\xi(\hat{s})+\xi(s-\hat{s}))
    > 0. \nonumber
  \end{split}
\end{align}
\end{small}
Hence, we have
\begin{small}
\begin{align} \label{eqn:contrfin}
  \begin{split}
    & \xi(s)= \| W^{-1}Y_{LL}^{-1}\overline{W}^{-1}\diag\left(\overline{s}\right) \|_\infty \\
    = & \| W^{-1}Y_{LL}^{-1}\overline{W}^{-1}\left(\diag\left(\overline{\hat{s}}\right)+\diag\left(\overline{s}-\overline{\hat{s}}\right)\right) \|_\infty \\
    \leq & \| W^{-1}Y_{LL}^{-1}\overline{W}^{-1}\diag\left(\overline{\hat{s}}\right) \|_\infty+ \\
    & \| W^{-1}Y_{LL}^{-1}\overline{W}^{-1}\diag\left(\overline{s}-\overline{\hat{s}}\right) \|_\infty \\
    = & \xi(\hat{s})+\xi(s-\hat{s})
    < \left(\frac{u_{min}+\frac{\xi(\hat{s})}{u_{min}}}{2}\right)^2 \\
    < & \left(\frac{u_{min}+\frac{\xi(\hat{s})}{u_{min}}+\sqrt{\Delta}}{2}\right)^2
    = (u_{min}-\rho)^2.
  \end{split}
\end{align}
\end{small}
Thus, by combining \eqref{eqn:contrmid1}, \eqref{eqn:contrmid2} and \eqref{eqn:contrfin}, we obtain
\begin{scriptsize}
\begin{align}
  \begin{split}
    & \| \tilde{G}(u^2)-\tilde{G}(u^1) \|_\infty \\
    \leq & \int_0^1\| W^{-1}Y_{LL}^{-1}\overline{W}^{-1}\diag\left(\frac{\overline{s}_1}{\overline{b}_1^2(t)},...,\frac{\overline{s}_N}{\overline{b}_N^2(t)}\right) \|_\infty \| u^2-u^1 \|_\infty dt\\
    \leq & \frac{\xi(s)}{(u_{min}-\rho)^2}\| u^2-u^1 \|_\infty
    < \| u^2-u^1 \|_\infty \nonumber
  \end{split}
\end{align}
\end{scriptsize}
which completes the proof of the Lemma.
\end{proof}
\begin{rem}
Equivalently, $G$ is a contraction mapping of $v$ on metric space $(\mathcal{D},d)$ where $d$ is defined by weighted vector norm $\ell_{W,\infty}$ such that $\| v \|_{W,\infty}\triangleq \|W^{-1}v \|_\infty$.
\end{rem}

\section{Numerical Illustration} \label{sec:num}
The proposed conditions have been tested through a large number of experiments on the basis of IEEE models \cite{NumExp}. Due to space limitations, we show the numerical result of one experiment on an IEEE 13-feeder model whose structure is illustrated as following in Fig.\ref{fig:IEEEModel}. We adjust it by assuming all power lines are of same type but different length. The model parameters are taken as typical values for medium-voltage cables as in \cite{NumExp2}.

\begin{figure}[h!]
\begin{center}
\includegraphics[scale=0.6]{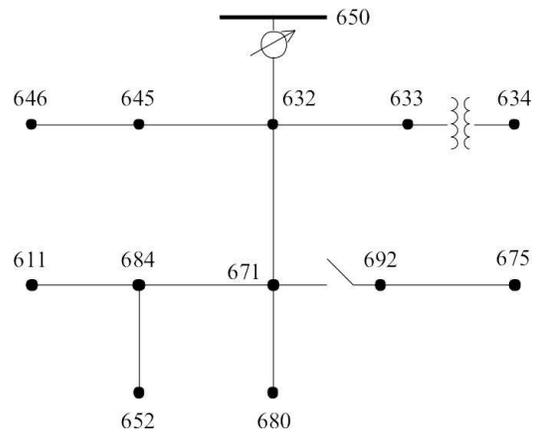}
\caption{IEEE 13-feeder grid.}
\label{fig:IEEEModel}
\end{center}
\end{figure}
The power components of the known solution $(\hat{s}, \hat{v})$ are given in Table \ref{table:Tab2}; voltage magnitudes are shown on Fig.\ref{fig:CompNR}. 
For better expression, first re-number all the nodes. Then take the power injection $\hat{s}=\hat{p}+j\hat{q}$ with normalization base $5$MVA for Power and $4.16/\sqrt{3}=2.4$kV for Voltage (which is also the voltage of the slack bus). 
\begin{table}[h!]
\caption{Key Parameters}
\label{table:Tab2}
\begin{small}
\begin{center}
\begin{tabular}{ccccc}
\noalign{\global\arrayrulewidth0.05cm}
\hline
\noalign{\global\arrayrulewidth0.4pt}
Index & $\hat{p}$(MW) & $\hat{q}$(Mvar) & $|\hat{s}|$(MVA) & e\\
\hline
$632\rightarrow1$ & -0.48 & -0.32 & 0.58 & 1 \\
$633\rightarrow2$ & 1.28 & 0.96 & 1.60 & 1.05 \\
$634\rightarrow3$ & -0.72 & -0.48 & 0.87 & 0.95 \\
$645\rightarrow4$ & 0.96 & 0.8 & 1.25 & 1.03 \\
$646\rightarrow5$ & -0.96 & -0.8 & 1.25 & 1.01 \\
$671\rightarrow6$ & 0.64 & 0.48 & 0.80 & 1.05 \\
$692\rightarrow7$ & -0.8 & -0.48 & 0.93 & 0.97 \\
$675\rightarrow8$ & 0.64 & 0.48 & 0.80 & 1.04 \\
$684\rightarrow9$ & -0.64 & -0.48 & 0.80 & 0.99 \\
$611\rightarrow10$ & 0.32 & 0.24 & 0.4 & 1 \\
$680\rightarrow11$ & -0.48 & -0.32 & 0.58 & 1 \\
$652\rightarrow12$ & 0.32 & 0.24 & 0.4 & 1.05 \\
\noalign{\global\arrayrulewidth0.05cm}
\hline
\noalign{\global\arrayrulewidth0.4pt}
\end{tabular}
\end{center}
\end{small}
\end{table}

\subsection{Illustration of Main Theorem}

Here, for illustration purpose, we apply Theorem~\ref{thm:main} to test the candidate power injection $s$, where $s_{j}=\hat{s}_{j}e_{j}$ with $\hat{s}$ and $e$ as in Table \ref{table:Tab2}. The computed results are shown in Table \ref{table:Tab3}. It is easy to check that the conditions in Theorem 1 are satisfied. In contrast, note that $\xi(s)=0.5770>0.25$, i.e. the method and conditions given in \cite{Bolo} and \cite{ImproveBolo} do not work in this case.

\begin{table}[h!]
\caption{Computed Results}
\label{table:Tab3}
\begin{small}
\begin{center}
\begin{tabular}{ccccc}
\noalign{\global\arrayrulewidth0.05cm}
\hline
\noalign{\global\arrayrulewidth0.4pt}
$\xi(\hat{s})$ & $\xi(s-\hat{s})$ & $\xi(s)$ & $u_{min}$ & $\rho$ \\
\hline
0.5692 & 0.0164 & 0.5770 & 1.0050 & 0.0412 \\
\noalign{\global\arrayrulewidth0.05cm}
\hline
\noalign{\global\arrayrulewidth0.4pt}
\end{tabular}
\end{center}
\end{small}
\end{table}

In Fig.\ref{fig:Ddomain}, the red circle is of radius $\rho=0.0412$ and represents $\mathcal{D}$ for one coordinate (here for instance, select Node 8).

\begin{figure}[h!]
\begin{center}
\includegraphics[scale=0.28]{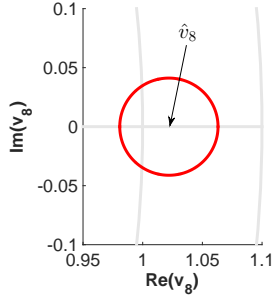}
\caption{The domain $\mathcal{D}$ for one coordinate (Node 8).}
\label{fig:Ddomain}
\end{center}
\end{figure}

In Fig.\ref{fig:CompNR}, the solved voltage magnitudes are shown. In the same figure, the Newton-Raphson method is used for checking the result. It is well-observed that the method gives out the same solution as Newton-Raphson method. Actually, all the solution coordinates lie in the domain given by our theorem.

\begin{figure}[h!]
\begin{center}
\includegraphics[height=5cm,width=9cm]{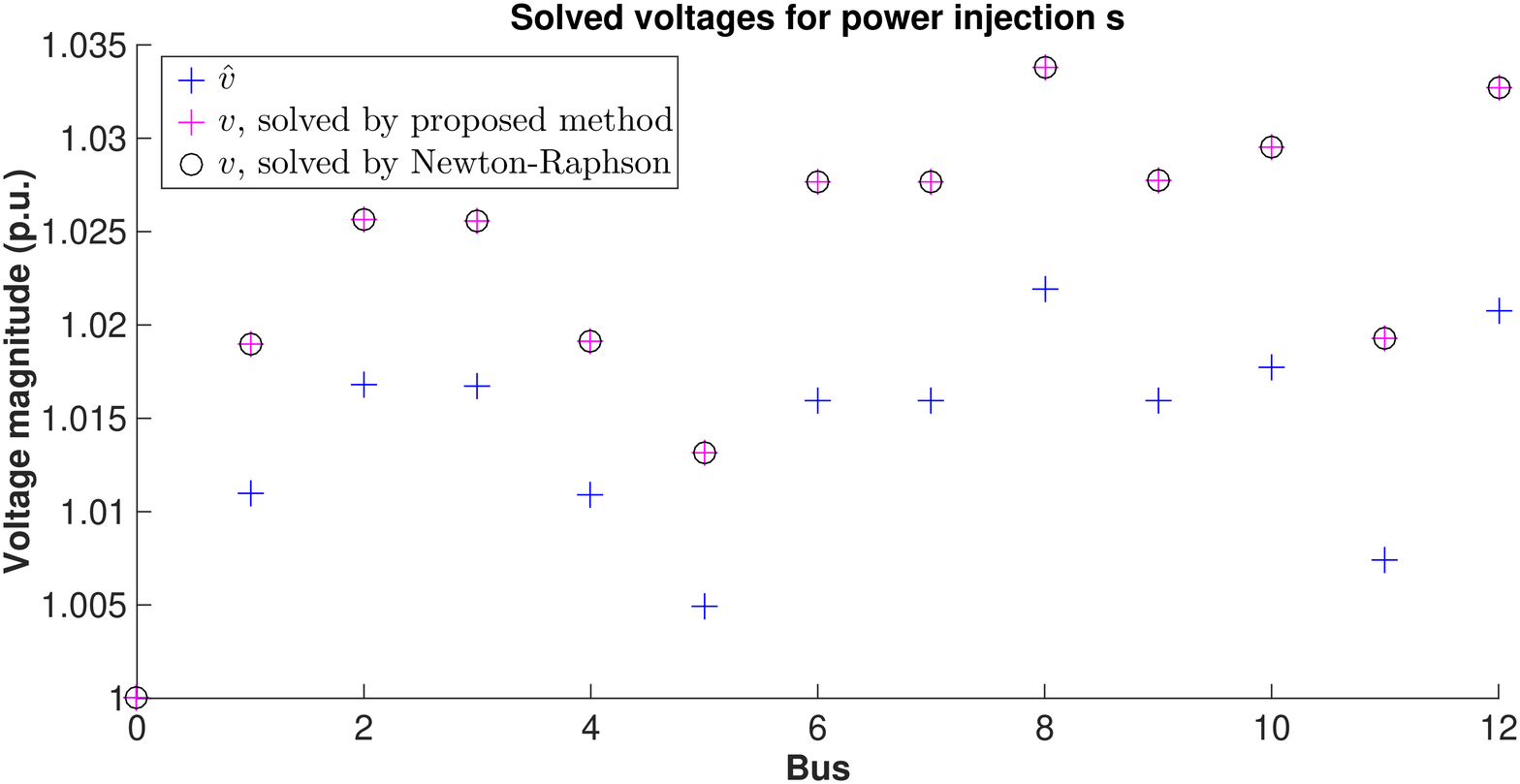}
\caption{The voltages of power injection $\hat{s}$ and the computed voltages of power injection $s$.}
\label{fig:CompNR}
\end{center}
\end{figure}
\begin{figure}[h!]
\begin{center}
\includegraphics[height=3.6cm,width=8.6cm]{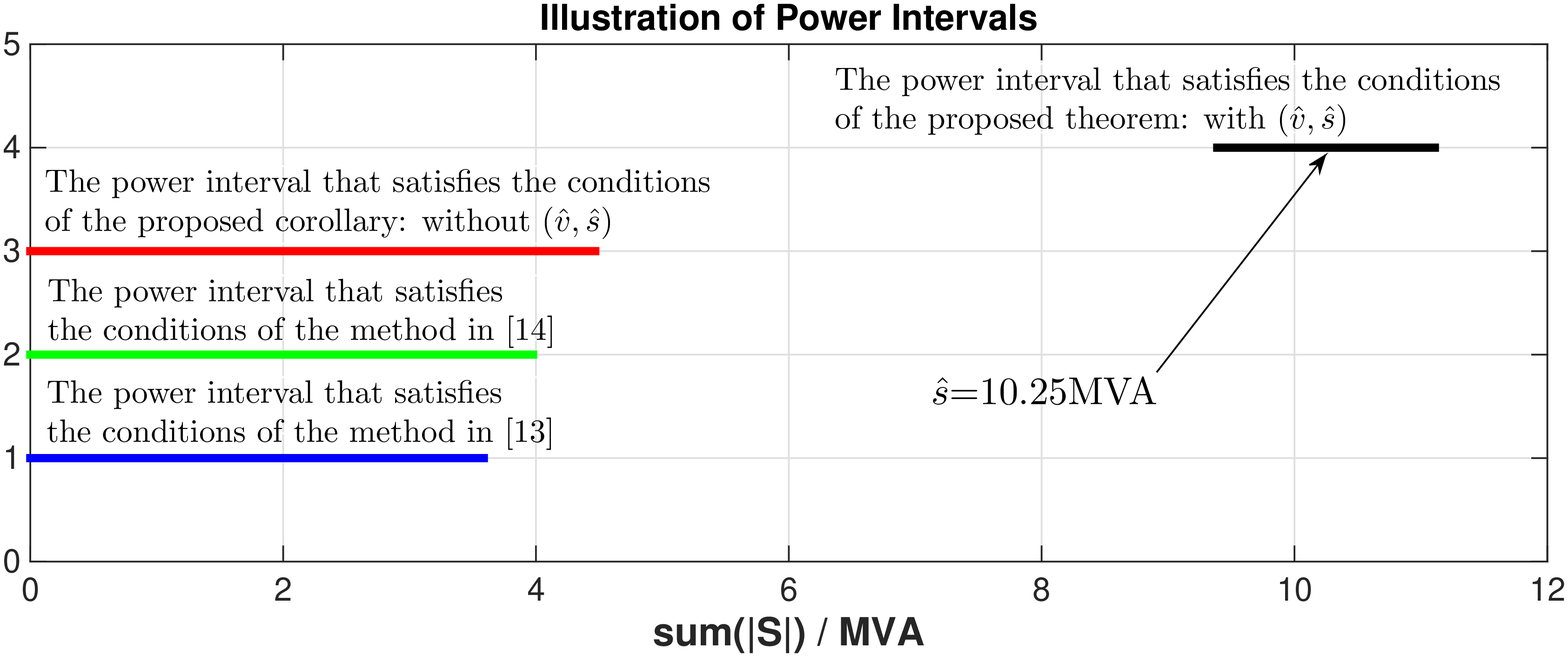}
\caption{Intervals of power injection that satisfy the conditions of the proposed theorem, the proposed corollary, the method in \cite{Bolo} and the method in \cite{ImproveBolo}.}
\label{fig:CPFLOW}
\end{center}
\end{figure}

\subsection{Continuation Power Flow analysis}
In this subsection, we illustrate the range of power injections that are allowed and provided by our theorems, using ``continuation power flow analysis'' \cite{CPFLOW}. To this end, we do not take the candidate power injections $s$ from Table \ref{table:Tab2} but instead we scale them from $\hat{s}$. Specifically, let $s=\kappa \frac{\hat{s}}{{\parallel \hat{s} \parallel}_1}$ with $\kappa \in [0,\infty)$ MVA. In other words, the scaling factor $\kappa=\sum_{i=1}^{N}|s_{i}|$ is the sum of all apparent power injections. Then,

\textit{1) With ($\hat{v}$,$\hat{s}$)}: By applying the conditions of the proposed main theorem, the black interval is obtained in Fig.\ref{fig:CPFLOW}. For all the summed power $\kappa$ in this interval, our conditions \eqref{eqn:cond1} and \eqref{eqn:cond2} are satisfied.

\textit{2) Without ($\hat{v}$,$\hat{s}$)}: Similarly, we can obtain the red interval by applying the conditions of the proposed corollary, the green interval by conditions of the method in \cite{ImproveBolo}, and the blue interval by conditions of the method in \cite{Bolo}. In this example, it is clear that the power interval provided by the proposed method (i.e., red interval) covers the power intervals provided by methods in \cite{Bolo} and \cite{ImproveBolo} (i.e., green and blue intervals). In other words, the proposed method is (strictly) stronger than the methods in \cite{Bolo} and \cite{ImproveBolo}.
\begin{rem}
Here, the $\Lambda$ for the method in \cite{ImproveBolo} is chosen as suggested in \cite{ImproveBolo} $\Lambda_k=1/\max_h|(W^{-1}Y_{LL}^{-1}\overline{W}^{-1})_{hk}|$.
\end{rem}

\label{sec-illu}
\section{Conclusion} \label{sec:conc}
We have provided explicit sufficient conditions that guarantee the existence and uniqueness of the feasible load-flow solution for distribution networks with generic topology modeled using their positive sequence equivalents. Our findings improve on all previously known results. The whole theory has been verified in IEEE benchmark grids.

The proposed method is of practical use, as it can easily be deployed in applications for microgrids and distribution networks that require solving load-flows in real time.


We plan to extend the results to more general three-phase networks in a subsequent paper. 
\appendices
\section{Invertibility of $Y_{LL}$} \label{sec:app}
In circuit theory \cite{Kuh}, there are already results on the invertibility of a full admittance matrix which includes the ground as one node. However, these results do not directly apply to $Y_{LL}$, which is only a sub-matrix of the nodal admittance matrix $Y$ that does not contain ground node. Having considered this fact, we give the proof of the invertibility of $Y_{LL}$ in this appendix.
It is worth noticing that the proof does not require the network to be radial.
\subsection{Modeling and the Admittance Matrix}
For the non-transformer connection (e.g., transmission lines) between node $i$ and $j$, the $2\times2$ longitudinal admittance matrix is
\[
\left[
\begin{array}{cc}
y_{ij} & -y_{ij} \\
-y_{ij} & y_{ij}
\end{array}
\right]
\]
where $y_{ij}$ (equal to $y_{ji}$) is the summed admittance of all power lines going directly from node $i$ to node $j$.

For the transformer connection between node $i$ and $j$, without loss of generality, let node $i$ be connected to the primary side of this transformer and node $j$ be at the secondary side, the  $2\times2$ admittance matrix is given as
\[
\left[
\begin{array}{cc}
y_{ij}^t & -y_{ij}^tK_{ij}^{-1} \\
-y_{ij}^t\overline{K_{ij}}^{-1} & y_{ij}^t|K_{ij}|^{-2}
\end{array}
\right]
\]
where $y_{ij}^t$ is the equivalent aggregated admittance on the primary side, complex number $K_{ij}$ is the ratio. Reciprocally, we can denote $y_{ij}^t|K_{ij}|^{-2}$ by $y_{ji}^t$ which is the equivalent aggregated admittance on the secondary side, and $K_{ij}^{-1}$ by $K_{ji}$ which is the inverse ratio.
Now, the terms in a general admittance matrix $Y$ including shunt elements can be explicitly written as
\[
Y_{ij}=
\left\{
\begin{array}{ll}
-y_{ij} & j\in\mathcal{N}(i) \\
-y_{ij}^tK_{ij}^{-1} & j\in\mathcal{N}^t(i) \\
0 & \textrm{otherwise}
\end{array}
\right.
\]
and
$$Y_{ii}=y_{ii}^{shunt}+\sum_{j\in\mathcal{N}(i)}y_{ij}+\sum_{j\in\mathcal{N}^t(i)}y_{ij}^t,$$
where $\mathcal{N}(i)$ is the set of nodes that have direct non-transformer connections with node $i$, and $\mathcal{N}^t(i)$ is the set of nodes that have direct transformer connections with node $i$. Here, $y_{ii}^{shunt}$ is the sum of shunt elements around node $i$.
\subsection{The Invertibility}
If the grid is viewed as a graph where buses are vertices and power lines are edges, then a new graph can be generated by eliminating node 0. Suppose that the new graph has $c$ connected components, then by carefully re-numbering each node, $Y_{LL}$ can be written as a $c$-block diagonal matrix. In this way, $Y_{LL}$ is invertible iff all blocks are invertible. Thus, if we can show an arbitrary one of these components invertible, then the invertibility of $Y_{LL}$ is proved. Thus, without loss of generality, assume that the new graph itself be one connected component.

First, denote this undirected graph as $\mathcal{G}=(\mathcal{V},\mathcal{E})$. In addition, let $\mathcal{V}^{slack}\subseteq \mathcal{V}$ be the set of nodes that are originally connected to the slack bus; $\mathcal{G}^{t}=(\mathcal{V}^t,\mathcal{E}^t)$ be the subgraph that contains all the transformer edges and corresponding endpoints; $\mathcal{G}_m=(\mathcal{V}_m,\mathcal{E}_m)$, $m\in\{1,...,M\}$ be all the $M$ connected components in $(\mathcal{V},\mathcal{E}\setminus\mathcal{E}^t)$.

Let $x$ be an N-by-1 vector such that $Y_{LL}x=0$, and for all $i\in\mathcal{V}^{slack}$ define
\[
\tilde{y}_{i0}=
\left\{
\begin{array}{ll}
y_{i0} & \textrm{non-transformer connection} \\
y_{i0}^t & \textrm{transformer connection}
\end{array}
\right.
\]
Then, we have
\begin{footnotesize}
\begin{align}
  \begin{split}
  	& x^{H}Y_{LL}x=\sum_{i,j\in\mathcal{V}}\overline{x}_i(Y_{LL})_{ij}x_j\\
  	= & \sum_{i=1}^N\sum_{j:(i,j)\in\mathcal{E}\setminus\mathcal{E}^t}y_{ij}\overline{x}_i(x_i-x_j)+\sum_{i=1}^N\sum_{j:(i,j)\in\mathcal{E}^t}y_{ij}^t\overline{x}_i(x_i-K_{ij}^{-1}x_j) \\
  	& +\sum_{i\in\mathcal{V}^{slack}}\tilde{y}_{i0}|x_i|^2+\sum_{i\in\mathcal{V}}y_{ii}^{shunt}|x_i|^2 \nonumber
  \end{split}
\end{align}
\end{footnotesize}
For the first term, we have
\begin{footnotesize}
\begin{align}
  \begin{split}
  	& \sum_{i=1}^N\sum_{j:(i,j)\in\mathcal{E}\setminus\mathcal{E}^t}y_{ij}\overline{x}_i(x_i-x_j)\\
  	= & \sum_{i=1}^N\sum_{j>i:(i,j)\in\mathcal{E}\setminus\mathcal{E}^t}y_{ij}\overline{x}_i(x_i-x_j)+\sum_{i=1}^N\sum_{j<i:(i,j)\in\mathcal{E}\setminus\mathcal{E}^t}y_{ij}\overline{x}_i(x_i-x_j) \\
  	= & \sum_{i=1}^N\sum_{j>i:(i,j)\in\mathcal{E}\setminus\mathcal{E}^t}y_{ij}\overline{x}_i(x_i-x_j)+\sum_{i=1}^N\sum_{j>i:(i,j)\in\mathcal{E}\setminus\mathcal{E}^t}y_{ji}\overline{x}_j(x_j-x_i)\\
  	= & \sum_{i=1}^N\sum_{j>i:(i,j)\in\mathcal{E}\setminus\mathcal{E}^t}\left(y_{ij}\overline{x}_i(x_i-x_j)+y_{ij}\overline{x}_j(x_j-x_i)\right)\\
  	= & \sum_{i<j:(i,j)\in\mathcal{E}\setminus\mathcal{E}^t}y_{ij}|x_i-x_j|^2 \nonumber
  \end{split}
\end{align}
\end{footnotesize}
Similarly, for the second term, we have
\begin{scriptsize}
\begin{align}
  \begin{split}
  	& \sum_{i=1}^N\sum_{j:(i,j)\in\mathcal{E}^t}y_{ij}^t\overline{x}_i(x_i-K_{ij}^{-1}x_j)\\
  	= & \sum_{i=1}^N\sum_{j>i:(i,j)\in\mathcal{E}^t}y_{ij}^t\overline{x}_i(x_i-K_{ij}^{-1}x_j)+\sum_{i=1}^N\sum_{j<i:(i,j)\in\mathcal{E}^t}y_{ij}^t\overline{x}_i(x_i-K_{ij}^{-1}x_j) \\
  	= & \sum_{i=1}^N\sum_{j>i:(i,j)\in\mathcal{E}^t}y_{ij}^t\overline{x}_i(x_i-K_{ij}^{-1}x_j)+\sum_{i=1}^N\sum_{j>i:(i,j)\in\mathcal{E}^t}y_{ji}^t\overline{x}_j(x_j-K_{ji}^{-1}x_i)\\
  	= & \sum_{i=1}^N\sum_{j>i:(i,j)\in\mathcal{E}^t}(y_{ij}^t\overline{x}_i(x_i-K_{ij}^{-1}x_j)+y_{ij}^t\overline{K}_{ij}^{-1}\overline{x}_j(K_{ij}^{-1}x_j-x_i))\\
  	= & \sum_{i<j:(i,j)\in\mathcal{E}^t}y_{ij}^{t}|x_i-K_{ij}^{-1}x_j|^2 \nonumber
  \end{split}
\end{align}
\end{scriptsize}
So that,
\begin{footnotesize}
\begin{align}
  \begin{split}
  	& x^{H}Y_{LL}x\\
  	= & \sum_{i<j:(i,j)\in \mathcal{E}\setminus \mathcal{E}^t}y_{ij}|x_i-x_j|^2+\sum_{i<j:(i,j)\in\mathcal{E}^t}y_{ij}^{t}|x_i-K_{ij}^{-1}x_j|^2 \\
  	& +\sum_{i\in\mathcal{V}^{slack}}\tilde{y}_{i0}|x_i|^2+\sum_{i\in\mathcal{V}}y_{ii}^{shunt}|x_i|^2=0 \nonumber
  \end{split}
\end{align}
\end{footnotesize}
Since $\Re{\tilde{y}_{i0}}>0$ for all $i\in\mathcal{V}^{slack}$, 
$\Re{y_{ii}^{shunt}}$ non-negative for all $i\in\mathcal{V}$, 
and $\Re{y_{ij}},\Re{y_{ij}^t}>0$ for all $i,j$ s.t. $(i,j)\in\mathcal{E}$, we have
\begin{enumerate}
\item $x_i=0$ for all $i\in\mathcal{V}^{slack}$;
\item $x_i=x_j$ for all $i,j\in\mathcal{V}_m$ given any $m\in\{1,...,M\}$;
\item $x_i=K_{ij}^{-1}x_j$ for all $i,j$ s.t. $(i,j)\in\mathcal{E}^t.$
\end{enumerate}
Because $\mathcal{G}$ is connected, it can be obtained that
\begin{itemize}
\item By above 1 and 2, there exists at least one $m$ s.t. $x_i=0$ for all $i\in\mathcal{V}_m$.
\item By 2 and 3, the zero value will propagate throughout $\mathcal{G}$.
\end{itemize}
Thus, the vector $x$ must be a zero vector, which implies $Y_{LL}$ has a trivial null space and hence is invertible.


\bibliographystyle{IEEEtran}
\bibliography{congrefs}

\begin{thebibliography}{10}
\providecommand{\url}[1]{#1}
\csname url@samestyle\endcsname
\providecommand{\newblock}{\relax}
\providecommand{\bibinfo}[2]{#2}
\providecommand{\BIBentrySTDinterwordspacing}{\spaceskip=0pt\relax}
\providecommand{\BIBentryALTinterwordstretchfactor}{4}
\providecommand{\BIBentryALTinterwordspacing}{\spaceskip=\fontdimen2\font plus
\BIBentryALTinterwordstretchfactor\fontdimen3\font minus
  \fontdimen4\font\relax}
\providecommand{\BIBforeignlanguage}[2]{{%
\expandafter\ifx\csname l@#1\endcsname\relax
\typeout{** WARNING: IEEEtran.bst: No hyphenation pattern has been}%
\typeout{** loaded for the language `#1'. Using the pattern for}%
\typeout{** the default language instead.}%
\else
\language=\csname l@#1\endcsname
\fi
#2}}
\providecommand{\BIBdecl}{\relax}
\BIBdecl

\bibitem{MultiSol1}
A.~J. Korsak, ``On the question of uniqueness of stable load-flow solutions,''
  \emph{IEEE Trans. on Power App. Syst.}, vol. PAS-91, pp. 1093--1100, May
  1972.

\bibitem{MultiSol2}
B.~K. Johnson, ``Extraneous and false load flow solutions,'' \emph{IEEE Trans.
  on Power App. Syst.}, vol. PAS-96, no.~2, pp. 524--534, Mar. 1977.

\bibitem{MultiSoluAlbert}
Y.~Wang and W.~Xu, ``The existence of multiple power flow solutions in
  unbalanced three-phase circuits,'' \emph{IEEE Trans. on Power Systems},
  vol.~18, no.~2, pp. 605--610, May 2003.

\bibitem{island-Concep}
T.~Del Carpio-Huayllas, D.~Ramos, and R.~Vasquez-Arnez, ``Microgrid transition
  to islanded modes: conceptual background and simulation procedures aimed at
  assessing its dynamic performance,'' in \emph{Proceedings of the 2012 IEEE
  PES Transmission and Distribution Conference and Exposition (T\&D)}, Orlando,
  FL, 2012, pp. 1–--6.

\bibitem{IslandingCommelec}
A.~Bernstein, L.~Reyes-Chamorro, J.-Y. Le~Boudec, and M.~Paolone, ``Real-time
  control of microgrids with explicit power setpoints: unintentional
  islanding,'' in \emph{Power{T}ech 2015}, 2015.

\bibitem{commelec1}
A.~Bernstein, L.~Reyes-Chamorro, J.-Y. {Le Boudec}, and M.~Paolone, ``{A
  composable method for real-time control of active distribution networks with
  explicit power setpoints. Part I: Framework},'' \emph{Electric Power Systems
  Research}, vol. 125, pp. 254 -- 264, 2015.

\bibitem{MIlic1}
J.~Thorp, D.~Schulz, and M.~Ilic-Spong, ``Reactive power-voltage problem:
  Conditions for the existence of solution and localized disturbance
  propagation,'' \emph{Int. J. Electr. Power Energy Syst.}, vol.~8, pp. 66--76,
  Apr. 1986.

\bibitem{MIlic2}
M.~Ilic, ``Network theoretic conditions for existence and uniqueness of steady
  state solutions to electric power circuits,'' in \emph{ICSAS}, San Diego, CA,
  1992, pp. 2821–--2828.

\bibitem{Chiang1}
H.-D. Chiang and M.~E. Baran, ``On the existence and uniqueness of load flow
  solution for radial distribution power networks,'' \emph{IEEE Trans. on
  Circuits and Systems}, vol.~37, no.~3, pp. 410--416, Mar. 1990.

\bibitem{Chiang2}
K.~N. Miu and H.-D. Chiang, ``Existence, uniqueness, and monotonic properties
  of the feasible power flow solution for radial three-phase distribution
  networks,'' \emph{IEEE Trans. on Circuits and Systems}, vol.~47, no.~10, pp.
  1502--1514, Oct. 2000.

\bibitem{Conv1}
J.~Meisel and R.~D. Barnard, ``Application of fixed-point techniques to
  load-flow studies,'' \emph{IEEE Trans. on Power App. Syst.}, vol. PAS-89,
  no.~1, pp. 136--140, Jan. 1970.

\bibitem{Lisboa}
A.~C. Lisboa, L.~S.~M. Guedes, D.~A.~G. Vieira, and R.~R. Saldanha, ``A fast
  power flow method for radial networks with linear storage and no matrix
  inversions,'' \emph{Int. J. Electr. Power Energy Syst.}, vol.~63, pp.
  901--907, 2014.

\bibitem{Bolo}
S.~Bolognani and S.~Zampieri, ``On the existence and linear approximation of
  the power flow solution in power distribution networks,'' \emph{arXiv
  preprint arXiv:1403.5031}, 2015.

\bibitem{ImproveBolo}
S.~Yu, H.~D. Nguyen, and K.~S. Turitsyn, ``Simple certificate of solvability of
  power flow equations for distribution systems,'' \emph{arXiv preprint
  arXiv:1503.01506}, 2015.

\bibitem{Zbus}
T.-H. Chen, M.-S. Chen, K.-J. Hwang, P.~Kotas, and E.~A. Chebli, ``Distribution
  system power flow analysis - a rigid approach,'' \emph{IEEE Trans. on Power
  Deliv.}, vol.~6, no.~3, pp. 1146--1152, Jul. 1991.

\bibitem{Markowitz}
H.~M. Markowitz, ``The elimination form of the inverse and its application to
  linear programming,'' \emph{Management Sci.}, vol.~3, no.~3, pp. 255--269,
  Apr. 1957.

\bibitem{LU1}
W.~F. Tinney and J.~W. Walker, ``Direct solutions of sparse network equations
  by optimally ordered triangular factorization,'' \emph{Proceedings of the
  IEEE}, no. 55(11), pp. 1801--–1809, Nov. 1967.

\bibitem{LU2}
F.~L. Alvarado, W.~F. Tinney, and M.~K. Enns, ``Sparsity in large-scale network
  computation,'' \emph{Advances in Electric Power and Energy Conversion System
  Dynamics and Control}, vol.~41, pp. 207--272, 1991.

\bibitem{Banach}
A.~Quarteroni, R.~Sacco, and F.~Saleri, \emph{Numerical mathematics}, 2nd~ed.,
  ser. 37.\hskip 1em plus 0.5em minus 0.4em\relax Springer, 2007.

\bibitem{MatDiff}
A.~Hjørungnes and D.~Gesbert, ``Complex-valued matrix differentiation:
  techniques and key results,'' \emph{IEEE Trans. on Signal Processing},
  vol.~55, no.~6, pp. 2740--2746, Jun. 2007.

\bibitem{NumExp}
W.~H. Kersting, ``Radial distribution test feeders,'' in \emph{IEEE PES Winter
  Meeting}, vol.~2, Jan. 2001, pp. 908--912.

\bibitem{NumExp2}
------, ``Radial distribution test feeders,'' \emph{IEEE Trans. on Power
  Systems}, vol.~6, no.~3, pp. 975--985, Aug. 1991.

\bibitem{CPFLOW}
F.~Milano, \emph{Power system modelling and scripting}.\hskip 1em plus 0.5em
  minus 0.4em\relax Springer, 2010.

\bibitem{Kuh}
C.~A. Desoer and E.~S. Kuh, \emph{Basic circuit theory}.\hskip 1em plus 0.5em
  minus 0.4em\relax McGraw-Hill Education, 1969.

\end{thebibliography}
\end{document}